\makeatletter\@addtoreset{equation}{section}\makeatother
\newtheorem{theorem}[equation]{Theorem}
\newtheorem{prop}[equation]{Proposition}
\newtheorem{lemma}[equation]{Lemma}
\newtheorem{theorem-definition}[equation]{Theorem-definition}
\theoremstyle{definition}
\newtheorem{example}[equation]{Example}
\newtheorem{definition}[equation]{Definition}
\theoremstyle{remark}
\newtheorem{remark}[equation]{Remark}
\makeatletter\@addtoreset{subsection}{equation}\makeatother
\newcommand{\p}{\mathbb{P}}
\newcommand{\cel}{\mathbb{Z}}
\newcommand{\com}{\mathbb{C}}
\newcommand{\ra}{\mathbb{Q}}
\begin{document}

\title{Surjective rational maps and del Pezzo surfaces}

\author{Ilya Karzhemanov and Anna Lekontseva}

\address{\newline{\normalsize Laboratory of AGHA, Moscow Institute of Physics and Technology, 9 Institutskiy per., Dolgoprudny,
Moscow Region, 141701, Russia}\medskip
\newline{\it E-mail address}: karzhemanov.iv@mipt.ru\medskip
\newline{\it E-mail address}: lekontseva.am@phystech.edu}

\begin{abstract}
We study surjective (not necessarily regular) rational
endomorphisms $f$ of smooth del Pezzo surfaces $X$. We prove that
under certain natural non\,-\,degeneracy condition $f$ can have
degree bigger than $1$ only when $(-K_X^2) > 5$. Some structural
properties of $f$ in the case $X = \p^2$ are also established.
\end{abstract}

\medskip

\thanks{{\it MS 2010 classification}: 14E05, 14J26, 14E30}

\thanks{{\it Key words}: surjective rational map, del Pezzo surface, N\'eron\,--\,Severi group}

\sloppy

\maketitle

\bigskip

\section{Introduction}
\label{section:int}

\refstepcounter{equation}
\subsection{}
\label{subsection:int-1}

In the paper \cite{kz}, the study of \emph{surjective rational
self\,-\,maps} $f$ of projective spaces $\p^n$ has been initiated,
and some algebro\,-\,geometric properties of $f$ were established
(cf. \cite{ilya}, \cite{ilya-quant} and \cite{kul-zhd} for further
developments of the subject). The present note extends these
considerations to the case of \emph{algebraic surfaces}.

Namely, let $X$ be a smooth projective surface over $\com$, and
$f: X \dashrightarrow X$ a surjective (not necessarily regular)
rational endomorphism of $X$. In other words, we assume that there
is a \emph{surjective morphism} $f: X \setminus Z \longrightarrow
X$, where $Z \subset X$ is a finite collection of points. Let also
$\deg f$ be the topological degree of $f$ (i.\,e. the number of
points in a general fiber of $f$). Then the basic question one may
address is when does such pair $(X,f)$, with $\deg f > 1$, exist?

We provide an answer to the latter question in the case of del
Pezzo surfaces:

\begin{theorem}
\label{theorem:main} Let $X$ and $f$ be as above. Assume also that
the anticanonical divisor $-K_X$ is ample and $f$ is
non\,-\,degenerate (see Definition~\ref{definition:def-non-deg}
below). Then $\exists f \ : \ \deg f
> 1 \Longleftrightarrow (-K_X^2)
> 5$.
\end{theorem}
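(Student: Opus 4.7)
\medskip
\noindent\textbf{Proof plan.}

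For the ``$\Leftarrow$'' direction, note that the del Pezzo surfaces $X$ with $(-K_X^2)\geq 6$ are precisely $\p^2$, $\p^1\times\p^1$, $\mathbb{F}_1$, $\mathrm{Bl}_2\p^2$, $\mathrm{Bl}_3\p^2$, each toric. On any smooth projective toric surface the multiplication-by-$n$ endomorphism of the open torus $(\com^*)^2\subset X$ extends to a regular surjective self-map of $X$ of degree $n^2$; taking $n=2$ produces the required $f$ with $\deg f=4>1$.

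For the ``$\Rightarrow$'' direction, suppose $X$ is del Pezzo with $d=(-K_X^2)\leq 5$ and $f\colon X\dashrightarrow X$ is surjective rational with $m:=\deg f\geq 2$; I aim for a contradiction. Resolve the indeterminacy of $f$ by a minimal composition of point blow-ups $\sigma\colon Y\to X$, giving $g:=f\circ\sigma\colon Y\to X$, a regular surjective morphism of degree $m$. Writing $K_Y=\sigma^*K_X+F$ ($F\geq 0$ the $\sigma$-exceptional discrepancy) and $K_Y=g^*K_X+R$ ($R\geq 0$ the ramification divisor of $g$), one obtains
\[
g^*(-K_X) \;=\; \sigma^*(-K_X) + R - F,
\]
which is nef and big on $Y$ with self-intersection $md$.

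The strategy is to combine three kinds of constraints coming from the del Pezzo geometry. First, testing the nefness of $g^*(-K_X)$ against both the $\sigma$-exceptional curves and the proper transforms in $Y$ of the $(-1)$-curves of $X$ (which for $d\leq 5$ form a rigid configuration governed by the root systems $A_4, D_5, E_6, E_7, E_8$) yields numerical inequalities on $R$, $F$ and the blow-up tree of $\sigma$, coupled with the equation $(g^*(-K_X))^2=md$. Second, Stein-factorizing $g=\mu\circ\tau$ with $\tau\colon Y\to Z$ birational and $\mu\colon Z\to X$ finite of degree $m$, and using $\pi_1(X)=1$, forces $\mu$ to be ramified, so one gets a non-empty branch divisor on $X$ whose class is constrained. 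Third, the hypothesis that $f$ is \emph{surjective} (in the strong sense $g(Y\setminus\sigma^{-1}(Z))=X$, not just dominant) prevents $g$ from contracting any component of the $\sigma$-exceptional divisor to a point uncovered by $g$ elsewhere, tying the blow-up combinatorics of $\sigma$ to the fibre structure of $g$.

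The main obstacle is to convert these three inputs into one contradiction. In the case when $\sigma$ is trivial, i.e.\ $f$ is already a regular endomorphism, the non-existence of non-invertible self-maps on a del Pezzo of degree $\leq 5$ is classical (Fujimoto and Nakayama). The real difficulty is the genuinely rational case, where one must rule out every blow-up tree of $\sigma$ compatible with a degree-$m$ surjective morphism $g$. I anticipate a case analysis on $d\in\{1,\ldots,5\}$, starting from $d=5$ (where the ten $(-1)$-curves form the Petersen graph and the root system is only $A_4$) and descending in $d$, with the Weyl-group rigidity of the Picard lattice and the positivity restrictions from $g^*(-K_X)$ providing the decisive combinatorial input at each step.
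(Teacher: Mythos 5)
There is a genuine gap: what you have written is a list of ingredients plus the admission that ``the main obstacle is to convert these three inputs into one contradiction,'' and that obstacle is never overcome. None of your three constraints (nefness of $g^*(-K_X)$ tested against $(-1)$-curves, ramification of the Stein factor $\mu$ forced by $\pi_1(X)=1$, and the surjectivity condition on the exceptional locus) visibly separates $d\le 5$ from $d\ge 6$: all three hold just as well for the explicit degree-$4$ rational self-map of $\p^2$, so they cannot by themselves yield a contradiction, and the anticipated ``case analysis on $d$ with Weyl-group rigidity'' is not carried out. A second structural problem is that you do all the numerics on a single resolution $Y\to X$; since $Y\ne X$, this setup cannot be iterated, so the constraints never compound. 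The paper's proof hinges on two ideas absent from your plan: (i) a pullback $f^*$ defined directly on $N^1(X)$ of the \emph{rational} map, shown to be nondegenerate with $f^*(\mathrm{Nef}(X))=\mathrm{Nef}(X)$, so that after replacing $f$ by an iterate it fixes every extremal ray of the rational polyhedral Mori cone; contracting an invariant $(-1)$-curve then descends $f$ to a surjective rational self-map of a del Pezzo of degree one higher, reducing everything to $d=5$ (so no descending induction on $d$ is needed -- the reduction goes \emph{up} in degree); and (ii) the actual contradiction at $d=5$, which is not lattice combinatorics at all but dynamics on the base of a conic bundle: each of the five conic bundles $\pi_j\colon X\to\p^1$ is $f$-equivariant, the induced $h_{j}\colon\p^1\to\p^1$ satisfies $h_j^{-1}(\Delta_j)\subseteq\Delta_j$ with $|\Delta_j|=3$, some $h_{j_0}$ has degree $>1$ (because $-2K_X=\sum E_i$ forces some $f^*(C_j)=m_jC_j$ with $m_j>1$), and then the log-differential pullback gives a section of a line bundle of degree $1-\deg h_{j_0}<0$ on $\p^1$. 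Your plan contains no analogue of either step, so as it stands it does not constitute a proof of the ``only if'' direction.

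(Your ``$\Leftarrow$'' direction is fine and agrees with the paper's Remark on toric del Pezzos; also note your appeal to Fujimoto--Nakayama/Beauville covers only the regular case, which the paper explicitly says does not extend to the rational setting because $f_*f^*=(\deg f)\,\mathrm{Id}$ is unavailable.)
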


\begin{remark}
\label{remark:disc-toric} Recall that $(-K_X^2) > 5$ iff the del
Pezzo surface $X$ is \emph{toric}. Hence it admits plenty of
(Frobenius) regular endomorphisms in this case. At the same time,
when $X = \p^2$ there exists a \emph{non\,-\,regular} surjective
non\,-\,degenerate $f$ (see Example~\ref{example:surj-f-p-2}
below), and it would be interesting to find similar maps for other
toric del Pezzos (cf. Remark~\ref{remark:f-star-of-nef}). Finally,
since all surfaces we consider are rational, they carry a lot of
\emph{non\,-\,surjective} dominant rational maps.
\end{remark}

\refstepcounter{equation}
\subsection{}
\label{subsection:int-2}

Let us recall that Theorem~\ref{theorem:main} was proved in
\cite{beauville} under the assumption that $f$ is \emph{regular}
(all such maps are automatically non\,-\,degenerate due to the
projection formula stated further). However, the arguments in
\emph{loc.\,cit.} do not extend immediately to our,
non\,-\,regular, situation (cf. Remark~\ref{remark:disc-toric}),
since the proof of \cite[Proposition 4]{beauville} relies heavily
on the ramification $K_X = f^*(K_X) + R$ and projection $f_*f^* =
(\deg f)\, \text{Id}$ formulae for $f^*$ and $f_*$ acting on the
N\'eron\,--\,Severi group $N^1(X)$.\footnote{~We refer to the book
\cite{lazar} for all relevant notions and facts of
``\,numerical\,'' algebraic geometry used in our paper.} Although
one might expect an analog of the identity $K_X = f^*(K_X) + R$ in
our case, as shown in Section~\ref{section:add} below for $X =
\p^2$, there is no hope to define $f_*$ properly. Yet, there
\emph{is} an avatar of $f^*$ for any surjective rational map (see
{\ref{subsection:pr-1}}), and so we can take the route as in the
paper \cite{totaro} towards the proof of
Theorem~\ref{theorem:main}.

Namely, after replacing $f$ by a suitable iterate $f \circ \ldots
\circ f$, one may assume that $f^*$ preserves the extremal rays
$R_i$ of the (\emph{rational polyhedral}) Mori cone of $X$ (cf.
our Proposition~\ref{theorem:f-star-def} and the proof of Lemma
6.2 in {\it op.\,cit.}; here we need the extra non\,-\,degeneracy
assumption in Theorem~\ref{theorem:main}). Then the considerations
can be reduced to the case $(-K_X^2) = 5$ (again as in \cite[Lemma
6.2]{totaro} --- see {\ref{subsection:pr-2}} below). We have $R_i
= \mathbb{R}_{\ge 0}[E_i]$ for the $(-1)$\,-\,curves $E_i \subset
X$. These $E_i$ are actually $10$ lines on $X$ with respect to the
anticanonical embedding in $\p^5$ (see e.\,g.
\cite[8.5.1]{dolgachev} for the Petersen graph configuration of
$E_i$). Also there are $5$ pencils of conics on $X$. They define
$5$ morphisms $\pi_j: X \longrightarrow \p^1$ with connected
fibers corresponding to certain faces of the Mori
cone.\footnote{~Recall that there is a birational contraction
$\sigma: X \longrightarrow \p^2$ of four disjoint $E_j$. Let them
be $E_1,\ldots,E_4$, say. Then the classes of conics on $X$ are
$\sigma^*(\ell) - E_j$ and $\sigma^*(2\ell) - E_1 - \ldots - E_4$
for a line $\ell \subset \p^2$.} Then, since $f^*(R_i) = R_i$ by
assumption, we get $\pi_j \circ f = h_j \circ \pi_j$ for some
$h_j: \p^1 \longrightarrow \p^1$ (cf. {\ref{subsection:pr-3}}).
Moreover, following the proof of \cite[Lemma 6.3]{totaro}, we show
that $h_j^{-1}(\Delta_j) \subseteq \Delta_j$ for the set of
critical values of $\pi_j$ (see Lemma~\ref{theorem:sing-fibs}
below). Finally, using the equality $-2K_X = \displaystyle \sum_{i
= 1}^{10} E_i$, we obtain that $f$ induces an
\emph{int\,-\,amplified} endomorphism on the base $\p^1 =
\pi_{j_0}(X)$ for some $j_0$ (see
Lemma~\ref{theorem:sing-fibs-111}). The latter can be seen to be
impossible exactly as in the proof of \cite[Proposition
6.4]{totaro}.

\bigskip

\thanks{{\bf Acknowledgments.}
We are grateful to Ilya Zhdanovskiy for his interest and fruitful
conversations. The work of the first author was carried out at the
Center for Pure Mathematics (MIPT) and was partially supported by
the Russian Science Foundation under grant \textnumero\,
25-21-00083 (https://rscf.ru/project/25-21-00083/).

\bigskip

\section{Proof of Theorem~\ref{theorem:main}}
\label{section:pr}

\refstepcounter{equation}
\subsection{}
\label{subsection:pr-1}

Let $Z \subset X$ be a finite subset such that the restriction
$f\big\vert_{X \setminus Z}$ is regular and $f(X \setminus Z) =
X$. For any curve $C \subset X$, we put $f^*(C) :=
\overline{f^{-1}(C)}$, the Zariski closure of the preimage
$f^{-1}(C)$ (note that the latter is also a curve (maybe affine)
on $X$). Further, if two curves $C_1,C_2 \subset X$ are linearly
equivalent, that is $C_1 = C_2$ as elements of the Picard group
$\text{Pic}(X)$, then we have the equality of divisors
$$
C_1 - C_2 = (\phi)
$$
for some rational function $\phi \in \com(X)$. It is immediately
checked (in local equations) that $f^*(C_1) - f^*(C_2) =
(f^*(\phi))$ and so $f^*$ extends to a \emph{linear}
transformation of $\text{Pic}(X) \otimes_{\cel} \mathbb{R} =
N^1(X)$.

\begin{definition}
\label{definition:def-non-deg} Let us call $f$
\emph{non\,-\,degenerate} if $f^*$ induces an automorphism of
$N^1(X)$.
\end{definition}

\begin{remark}
\label{remark:f-star-of-nef} As we have already mentioned in
Introduction, every regular $f$ is non\,-\,degenerate, likewise
every surjective rational endomorphism of $\p^2$. At the same
time, contrary to our initial claim, there do exist surjective
self\,-\,maps of del Pezzo surfaces, which fail the
non\,-\,degeneracy property. Namely, take $X := \p^1 \times \p^1$
and (surjective) morphism $\varphi: \p^2 \setminus \Sigma
\longrightarrow X$ given by $$\varphi([x:y:z]) := [xy:x^2-y^2]
\times [xz:x^2-z^2],$$ where $\Sigma := \{[0:1:0],[0:0:1]\}$. Let
also $\psi: X \longrightarrow \p^2$ be the projection from a point
not contained in $X$. Then $f := \varphi \circ \psi : X
\dashrightarrow X$ is clearly surjective with $Z =
\psi^{-1}(\Sigma)$. The preceding considerations yield a
factorization $$f^*: N^1(X)  \stackrel{\psi^*}{\to} N^1(\p^2)
\simeq \mathbb{R} \stackrel{\varphi^*}{\to} N^1(X)$$ into linear
maps. This shows that $f$ \emph{can not} be non\,-\,degenerate for
$N^1(X) \simeq \mathbb{R}^2$.
\end{remark}

We will assume in what follows that $f$ is non\,-\,degenerate.
Then, letting $\text{Nef}(X)$ be the nef cone of $X$, we obtain

\begin{prop}
\label{theorem:f-star-def} The linear map $f^*: N^1(X) \to N^1(X)$
satisfies $f^*(\mathrm{Nef}(X)) = \mathrm{Nef}(X)$.
\end{prop}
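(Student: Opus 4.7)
The plan is to reduce both assertions to intersection theory on a resolution of $f$. Since $Z$ is finite in the smooth surface $X$, I can choose a sequence of point blow-ups $\pi\colon Y \to X$ over $Z$ such that $\tilde f := f\circ \pi\colon Y\to X$ is a genuine morphism. Checking classes of irreducible curves --- the scheme-theoretic preimage $\tilde f^{-1}(C)$ splits into the proper transform of $\overline{f^{-1}(C)}$ plus $\pi$-exceptional components killed by $\pi_*$ --- gives the identification $f^*(D) = \pi_*(\tilde f^*(D))$ in $N^1(X)$ for every $D \in N^1(X)$.

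Nef preservation then drops out of the projection formula: for $D \in \mathrm{Nef}(X)$ and any irreducible curve $C \subset X$,
\[
\bigl(f^*(D) \cdot C\bigr)_X \;=\; \bigl(\tilde f^*(D) \cdot \pi^*(C)\bigr)_Y \;\geq\; 0,
\]
since $\tilde f^*(D)$ is nef on $Y$ as the pullback of a nef class by a morphism, and $\pi^*(C)$ is effective. This yields $f^*(\mathrm{Nef}(X))\subseteq \mathrm{Nef}(X)$.

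For the non-degeneracy and the reverse inclusion I would exploit a modified projection formula. Writing $E_D := \tilde f^*(D) - \pi^*(f^*(D))$, a divisor supported on the $\pi$-exceptional locus since both pullbacks agree over $X\setminus Z$, one has $(E_D \cdot \pi^*A)_Y = 0$ for every $A \in N^1(X)$; combining this with the ordinary projection formula for the morphism $\tilde f$ gives
\[
\bigl(f^*(D_1) \cdot f^*(D_2)\bigr)_X \;=\; (\deg f)\,(D_1 \cdot D_2)_X \;-\; (E_{D_1} \cdot E_{D_2})_Y.
\]
Specializing to $D_1 = D_2 = H$ ample and invoking negative semi-definiteness of the intersection form on $\pi$-exceptional divisors yields $(f^*(H))^2 \geq (\deg f)(H^2) > 0$; hence $f^*(H) \neq 0$. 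A short argument using this positivity, the bilinear identity above, and the non-degeneracy of the intersection pairing on $N^1(X)$ then rules out any nonzero element in $\ker f^*$, so $f^*$ is a linear isomorphism of $N^1(X)$. For the del Pezzo $X$ at hand the cone $\mathrm{Nef}(X)$ is rational polyhedral, so $f^*$ must permute its finitely many extremal rays --- possibly after replacing $f$ by an iterate, as is done later in the paper --- upgrading the established inclusion to the equality $f^*(\mathrm{Nef}(X)) = \mathrm{Nef}(X)$. The central technical difficulty I expect is controlling the correction term $(E_{D_1} \cdot E_{D_2})_Y$; Mumford's negative (semi-)definiteness of the $\pi$-exceptional lattice is the key structural input that makes this manageable.
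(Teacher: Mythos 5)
Your setup agrees with the paper's: the identification $f^*(D)=\pi_*(\tilde f^*(D))$ is exactly the paper's $g_*h^*=f^*$, and your projection-formula proof of $f^*(\mathrm{Nef}(X))\subseteq\mathrm{Nef}(X)$ is correct and arguably cleaner than the paper's route (Lemma~\ref{theorem:f-star-def-lemma-1}), which goes through Stein factorization and irreducibility of generic members of $|H|$. The identity $(f^*D_1\cdot f^*D_2)=(\deg f)(D_1\cdot D_2)-(E_{D_1}\cdot E_{D_2})$ also checks out, as does $(f^*H)^2>0$. However, both remaining assertions contain genuine gaps. For non-degeneracy, what your identity actually yields is that the intersection form is $\le 0$ on $\ker f^*$ (from $0=(f^*D)^2\ge(\deg f)\,D^2$); by the Hodge index theorem a subspace of $N^1(X)$ on which the form is negative semi-definite can have codimension one, so this does not force $\ker f^*=0$. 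The promised ``short argument'' is precisely the missing content. The paper sidesteps intersection theory here entirely: it works on the level of $\mathrm{Pic}(X)$ and argues with \v{C}ech cocycles that $f^*(A)=f^*(B)$ forces $A=B$, because the transition functions of $A-B$ are pulled back through the dominant map $f$.

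The more serious problem is the last step. An injective linear map sending a rational polyhedral cone into itself need not send it onto itself, and no iterate need do so either: on $\mathbb{R}^2$ the map $T(x,y)=(x+y,y)$ is injective and preserves the first quadrant, yet $T^n$ maps it onto a strictly smaller subcone for every $n$, so no iterate permutes the extremal rays. Hence ``injective, plus $f^*(\mathrm{Nef}(X))\subseteq\mathrm{Nef}(X)$, plus $\mathrm{Nef}(X)$ polyhedral'' does not give the equality, with or without iteration. Passing to an iterate is in any case not legitimate inside this Proposition, whose conclusion concerns $f$ itself; in the paper the iteration is performed only later, in {\ref{subsection:pr-2}}, and it is the already-established equality $f^*(\mathrm{Nef}(X))=\mathrm{Nef}(X)$ that produces the permutation of the rays $R_i$ being iterated. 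The paper closes this gap with a separate convexity argument (Lemma~\ref{theorem:f-star-def-lemma}): assuming some ample $\alpha$ lies outside $f^*(\mathrm{Nef}(X))$, it exhibits a nef class $\gamma-\beta$ with $\alpha=f^*(\gamma-\beta)$, a contradiction. You need a substitute for that step; your intersection-theoretic framework does not by itself supply one.
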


\begin{proof}
Let us start with the next

\begin{lemma}
\label{theorem:f-star-def-lemma-1} $f^*(\mathrm{Nef}(X)) \subseteq
\mathrm{Nef}(X)$.
\end{lemma}

\begin{proof}
Consider some nef $\ra$\,-\,divisor $H \in \mathrm{Nef}(X)$. We
can write $H = \displaystyle\lim_{k \to \infty} \alpha_k$ for
ample divisors $\alpha_k$ (e.\,g. $\alpha_k = H - K_X/k$) via
Kleiman's criterion. Then, since $f^*$ is continuous and the limit
of nef divisors is also nef, it suffices to assume that $H$ is
ample. Furthermore, replacing $H$ by $mH$ with $m \gg 1$, we
reduce to the case of \emph{very ample} $H$.

Let
\begin{equation}
\label{diag} \xymatrix{
& Y \ar[dl]_g \ar[dr]^{h} \\
X \ar@{-->}[rr]^{f} && X}
\end{equation}
be a resolution of indeterminacies of $f$. Here $g$ is a
composition of blow\,-\,ups and $h$ is a regular morphism. Let
$H_1$ and $H_2$ be two generic curves in the linear system $|H|$.
Then the curves $h^*(H_i)$ are \emph{irreducible}. Indeed, if $h =
h_1 \circ h_2$ is the Stein factorization for $h$, with finite
$h_1: Y' \longrightarrow X$ and birational $h_2: Y \longrightarrow
Y'$ (the surface $Y'$ is normal here), then the divisor $h_1^*(H)$
is ample (again by Kleiman) and so $h_1^*(H_i)$ are irreducible.
In turn, $h_2^*h_1^*(H_i) = h^*(H_i)$ are irreducible, since the
linear system $|H|$ is basepoint\,-\,free by assumption and $h_2$
is an isomorphism over a general point on $Y'$.

Now, we have $g_*h^*(H) = f^*(H)$ by construction and both
(distinct) curves $g_*h^*(H_1) \sim g_*h^*(H_2)$ ($\sim f^*(H)$)
are irreducible, which implies that $f^*(H) \in \mathrm{Nef}(X)$.
It remains to observe that rational classes are dense in the cone
$\mathrm{Nef}(X)$.
\end{proof}

Suppose that $f^*(\mathrm{Nef}(X)) \ne \mathrm{Nef}(X)$. Then
there is an \emph{ample} class $\alpha \in \mathrm{Nef}(X)
\setminus f^*(\mathrm{Nef}(X))$ (cf.
Lemma~\ref{theorem:f-star-def-lemma-1}).

\begin{lemma}
\label{theorem:f-star-def-lemma} There exist $\beta, \gamma \in
N^1(X)$ such that $\alpha = f^*(\gamma - \beta)$, and furthermore
$\gamma - \beta \in \mathrm{Nef}(X)$.
\end{lemma}

\begin{proof}
Since $f^*$ is an isomorphism, the cone $f^*(\mathrm{Nef}(X))$ has
non\,-\,empty interior, containing a class $f^*(\beta) \ne 0$.
This gives $t\alpha + (1-t)f^*(\beta) = f^*(\gamma)$ for some
(ample) $\gamma \in N^1(X)$ and $0 \le t \le 1$ (see
Figure~\ref{fig-1}).

Tending $t$ to $1$ makes vector $\gamma$ arbitrarily close to
$\alpha$. Then, since the Mori cone $\overline{NE}(X)$ is rational
polyhedral (cf. {\ref{subsection:pr-2}} below), we obtain that
$(\gamma - (1-t_0)\beta) \cdot C > 0$ for some fixed $0 < t_0 \le
1$ and any curve on $X$. Hence the cycle $\gamma - (1-t_0)\beta$
is ample by Kleiman's criterion. This implies the assertion for
$t_0\alpha = f^*(\gamma - (1-t_0)\beta)$.

\begin{figure}[h]
\includegraphics[scale=1.9]{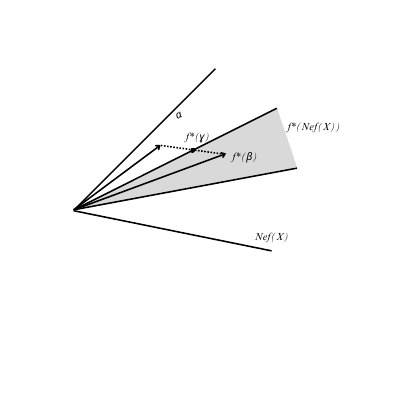}
\caption{~}\label{fig-1}
\end{figure}

\end{proof}

From Lemma~\ref{theorem:f-star-def-lemma} we immediately get
contradiction with the assumption $f^*(\mathrm{Nef}(X)) \ne
\mathrm{Nef}(X)$. This proves
Proposition~\ref{theorem:f-star-def}.
\end{proof}

\refstepcounter{equation}
\subsection{}
\label{subsection:pr-2}

Let $\overline{NE}(X)$ be the Mori cone of $X$ with generating
rays $R_i = \mathbb{R}_{\ge 0}[E_i]$ (cf. {\ref{subsection:int-2}}
and Figure~\ref{fig-2}). Then, since $\overline{NE}(X)$ is the
dual of $\mathrm{Nef}(X)$, it follows from
Proposition~\ref{theorem:f-star-def} that $f^*$ induces a
permutation on the set $\{R_1,\ldots,R_{10}\}$. Hence, after a
finite number of iterations, one may assume $f^*(R_i) = R_i$ for
all $i$.

Fix some $i$ and consider the contraction $\sigma_i: X
\longrightarrow X'$ of $R_i$. Then the property $f^*(R_i) = R_i$
yields a commutative diagram
\begin{equation}
\nonumber \xymatrix{
X\ar@{->}[d]_{\sigma_i}\ar@{-->}[r]^{f}&X\ar@{->}[d]^{\sigma_i}\\
X'\ar@{-->}[r]^{f'}&X'.}
\end{equation}
Here $X'$ is again a del Pezzo surface, with $(-K_{X'}^2) =
(-K_X^2) + 1$, and $f'$ is its rational endomorphism. Moreover,
since $X \setminus E_i \simeq X' \setminus {\{\sigma_i(E_i)\}}$
and $f^{-1}(E_i) \subseteq E_i$ by assumption, the map $f': X'
\dashrightarrow X'$ is surjective; it is also non\,-\,degenerate
because $f^*$ sends hyperplanes to hyperplanes. Thus for the proof
of Theorem~\ref{theorem:main} it suffices to assume that $(-K_X^2)
= 5$.

\refstepcounter{equation}
\subsection{}
\label{subsection:pr-3}

Further, let $C_j \in N^1(X)$, $1 \le j \le 5$, be the classes of
conics on $X \subset \p^5$, satisfying $(C_j^2) = 0$. The linear
systems $|C_j|$ define conic bundles $\pi_j: X \longrightarrow
\p^1$ whose singular fibers are pairs of two lines. It follows
from the formula
$$
e(X) = e(\p^1)e(C_j) + \sum_{p \ : \ \pi_j^{-1}(p) \ \text{is
singular}} e(\pi_j^{-1}(p)) - e(C_j)
$$
for Euler characteristics that each morphism $\pi_j$ has actually
$3$ singular fibers. Hence we have $|\Delta_j| = 3$ for the set
$\Delta_j$ of critical values of $\pi_j$.

Now, for every $j$ we put
$$
C_j^\bot := \{x \in N^1(X) \ : \ C_j \cdot x = 0\},
$$
a hyperplane in $N^1(X)$, which uniquely determines $\pi_j$.
Furthermore, since the divisor $C_j$ is nef, the intersection $F_j
:= C_j^\bot \cap \overline{NE}(X)$ is a \emph{face} of the Mori
cone (see Figure~\ref{fig-2}). In particular, since $f^*(R_i) =
R_i$ for all $i$, we obtain $f^*(F_j) = F_j$ as well. But the
latter precisely means that $f$ preserves every contraction
$\pi_j$. Thus we get $\pi_j \circ f = h_j \circ \pi_j$ for some
$h_j: \p^1 \longrightarrow \p^1$ and all $j$ (cf.
{\ref{subsection:pr-2}}).

\begin{figure}[h]
\includegraphics[scale=1.7]{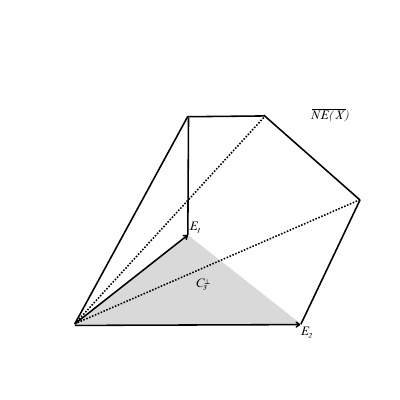}
\caption{~}\label{fig-2}
\end{figure}

The following property follows from \cite[Lemma 6.3]{totaro}:

\begin{lemma}
\label{theorem:sing-fibs} $h_j^{-1}(\Delta_j) \subseteq \Delta_j$.
\end{lemma}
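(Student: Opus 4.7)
The plan is to prove the stronger statement that $h_j^{-1}(p) = \{p\}$ for every $p \in \Delta_j$. Fix such a $p$ and write the singular fiber as $\pi_j^{-1}(p) = E_a + E_b$, where $E_a, E_b$ are two of the ten $(-1)$-curves generating extremal rays $R_a, R_b$. Given any $q \in h_j^{-1}(p)$, the goal is to force the whole fiber $\pi_j^{-1}(q)$ to lie in $E_a \cup E_b$; this will leave no option but $\pi_j^{-1}(q) = E_a + E_b$, and hence $q = p \in \Delta_j$.

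The first ingredient is a numerical identity for $f^*$. Since $C_j$ is the class of a fiber of $\pi_j$ and $\pi_j \circ f = h_j \circ \pi_j$, one obtains $f^*(C_j) = (\deg h_j) \cdot C_j$ in $N^1(X)$. Combined with the decomposition $[C_j] = [E_a] + [E_b]$ and the fact that $f^*$ acts by positive scalars on each of $R_a, R_b$ (by Proposition~\ref{theorem:f-star-def} and the standing assumption $f^*(R_i) = R_i$), linear independence of $[E_a], [E_b]$ in $N^1(X)$ forces $f^*[E_a] = (\deg h_j)[E_a]$ and $f^*[E_b] = (\deg h_j)[E_b]$.

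The second step upgrades this numerical equality to an equality of effective divisors. Here I would use the rigidity of $(-1)$-curves on a del Pezzo surface: any effective divisor $D$ with $[D] = m[E_a]$ must coincide with $mE_a$, which follows from $\mathrm{Pic}(X) = N^1(X)$ together with the intersection computation $D \cdot E_a = -m$ applied to the decomposition $D = nE_a + D'$ with $D'$ effective not containing $E_a$. Applied to the effective divisor $\overline{f^{-1}(E_a)}$, this yields $\overline{f^{-1}(E_a)} = (\deg h_j)\, E_a$ as a divisor, and analogously for $E_b$. In particular, $f^{-1}(E_a \cup E_b) \subseteq E_a \cup E_b$ set-theoretically on $X \setminus Z$.

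To conclude, the intertwining $\pi_j \circ f = h_j \circ \pi_j$ gives $\pi_j^{-1}(h_j^{-1}(p)) = \overline{f^{-1}(\pi_j^{-1}(p))} \subseteq E_a \cup E_b$, so for every $q \in h_j^{-1}(p)$ the fiber $\pi_j^{-1}(q)$ is an effective divisor of class $C_j = [E_a] + [E_b]$ supported inside $E_a \cup E_b$; comparing classes forces $\pi_j^{-1}(q) = E_a + E_b$, and disjointness of distinct fibers of $\pi_j$ yields $q = p$. I expect the main obstacle to be the second step: without the rigidity of $(-1)$-curves the preimage $\overline{f^{-1}(E_a)}$ could a priori acquire extra components, which would break the containment $\pi_j^{-1}(h_j^{-1}(p)) \subseteq E_a \cup E_b$ that powers the final line.
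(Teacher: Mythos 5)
Your argument is correct, but it takes a genuinely different route from the paper's. The paper follows \cite[Lemma 6.3]{totaro}: the image under $f$ of (the regular part of) a fiber of $\pi_j$ is open and closed in another fiber, hence by connectedness is that whole fiber; an irreducible fiber therefore surjects onto an irreducible fiber, and the contrapositive gives $h_j^{-1}(\Delta_j) \subseteq \Delta_j$. You instead exploit the standing normalization $f^*(R_i) = R_i$ together with $E_i^2 = -1$: the rigidity of $(-1)$\,-\,curves forces $\overline{f^{-1}(E_a)} = E_a$ for each line in a singular fiber, whence $\pi_j^{-1}(h_j^{-1}(p)) \subseteq E_a \cup E_b$ and the strictly stronger conclusion $h_j^{-1}(p) = \{p\}$ for every $p \in \Delta_j$. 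Each step checks out: the identity $f^*(C_j) = m_j C_j$ that you feed into the linear-independence argument comes from $\pi_j \circ f = h_j \circ \pi_j$ (the preimage of a general fiber is a union of fibers) and is established independently of the lemma, so there is no circularity; and your rigidity lemma is the standard consequence of $E_a^2 < 0$. The trade-off: the paper's argument is softer and does not need $f^*$ to fix each extremal ray, whereas yours is tied to the degree-$5$ geometry but buys more --- total ramification of $h_j$ over the three points of $\Delta_j$ gives $2\deg h_j - 2 \ge 3(\deg h_j - 1)$ by Riemann--Hurwitz, i.e. $\deg h_j = 1$ for all $j$, which together with Lemma~\ref{theorem:sing-fibs-111} would already complete the proof of Theorem~\ref{theorem:main} without the log-differential-forms argument at the end of Section~\ref{section:pr}.
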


\begin{proof}
This is basically \cite[Lemma 6.3]{totaro}. Namely, if $F$ is a
fiber of $\pi_j$ satisfying $\dim f(F \setminus (F \cap Z)) = 1$
(cf. {\ref{subsection:pr-1}}), then the main point of the argument
in \cite{totaro} is to show that the image of $f\big\vert_F$ is
open and closed in another fiber of $\pi_j$. This holds in our
situation, since $f$ is surjective (hence open) and $F$ is a
curve, so the claim follows.
\end{proof}

We have $f^*(C_j) = m_jC_j$ for some $m_j \in \mathbb{N}$ by
construction. Since $f$ is not ramified along a general fiber $F
\sim C_j$ of $\pi_j$ and $\pi_j \circ f = h_j \circ \pi_j$ as
above, this $m_j$ equals the number of connected components of
$f^{-1}(F)$, which is the same as $\deg h_j$.

\begin{lemma}
\label{theorem:sing-fibs-111} There exists $j_0$ such that
$m_{j_0}
> 1$.
\end{lemma}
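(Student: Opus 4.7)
I argue by contradiction: suppose $m_j = 1$ for every $j$; I will show this forces $\deg f \leq 1$, contradicting the hypothesis $\deg f > 1$.

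Since $f^*(R_i) = R_i$ for every $i$, I may write $f^*(E_i) = \lambda_i E_i$ with $\lambda_i > 0$. Each singular fiber of $\pi_j$ is a pair $E_a + E_b$ of distinct $(-1)$-curves with $E_a + E_b \sim C_j$, and applying $f^*$ to this relation together with the linear independence of the ten $(-1)$-curve classes in $N^1(X)$ yields $\lambda_a = \lambda_b = m_j$. A simple incidence count (five bundles $\pi_j$, each with three singular fibers of two components, giving $5 \cdot 6 = 30 = 10 \cdot 3$ incidences) shows every $E_i$ appears in some singular fiber, so under the contradiction hypothesis all $\lambda_i = 1$. The equality $-2K_X = \sum_{i=1}^{10} E_i$ from {\ref{subsection:int-2}} then gives $f^*(-K_X) = -K_X$.

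To extract $\deg f$, I pass to the resolution diagram \eqref{diag}. By the definition of $f^*$ via closures of preimages and the birationality of $g$, one has $f^*(D) = g_* h^*(D)$ for every divisor $D$ on $X$. Hence $g_*\bigl(h^*(-K_X) - g^*(-K_X)\bigr) = 0$, so $F := h^*(-K_X) - g^*(-K_X)$ is a $g$-exceptional divisor on $Y$. The projection formula gives $g^*(-K_X) \cdot F = 0$, and the negative semi-definiteness of $g$-exceptional divisors on a smooth surface (Zariski's lemma) gives $F^2 \leq 0$. Since $h$ is a surjective regular morphism between smooth surfaces, the standard identity $h^*(-K_X)^2 = (\deg h)(-K_X)^2 = 5\deg f$ applies, and therefore
\[
5 \deg f \,=\, g^*(-K_X)^2 + F^2 \,=\, 5 + F^2 \,\leq\, 5,
\]
forcing $\deg f \leq 1$, as required.

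The main obstacle, and the reason a dedicated argument is needed, is that in the non-regular setting the projection formula $f_* f^* = (\deg f)\,\mathrm{Id}$ on $N^1(X)$ is unavailable (cf. {\ref{subsection:int-2}}), so $\deg f$ cannot be read off directly from the action of $f^*$ on $-K_X$. The remedy is to transit to the resolution $Y$, where the regular morphism $h$ \emph{does} satisfy the standard degree identity on self-intersections; the discrepancy between $h^*(-K_X)$ and $g^*(-K_X)$ is a $g$-exceptional divisor whose self-intersection is controlled in sign by Zariski's lemma, which is exactly what yields the required bound.
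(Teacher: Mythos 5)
Your argument is correct in substance and reaches the same waypoint as the paper's proof --- namely that $m_j = 1$ for all $j$ forces $f^*$ to fix $-K_X$, via the identity $-2K_X = \sum_{i=1}^{10} E_i$ --- but both halves are executed differently. For the first half, the paper simply notes that the five classes $C_j$ span $N^1(X)$, so $f^*(C_j) = C_j$ for all $j$ already gives $f^* = \mathrm{Id}$ on $N^1(X)$, and in particular $f^*(E_i) = E_i$; this is shorter than your fiber-by-fiber computation of the $\lambda_i$. For the second half, the paper concludes $\deg f = 1$ by observing that $f^*(-2K_X) = -2K_X$ makes $f$ the restriction of a projective transformation of $|{-2K_X}|^*$. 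Your ending is genuinely different: you pass to the resolution, write $h^*(-K_X) = g^*(-K_X) + F$ with $F$ a $g$-exceptional class (since $g_*F = f^*(-K_X) + K_X = 0$), and use negative definiteness of the exceptional lattice to get $5\deg f = h^*(-K_X)^2 = 5 + F^2 \le 5$. This purely intersection-theoretic route avoids any appeal to the linearity of the induced map on the linear system $|{-2K_X}|$ and is arguably the more self-contained of the two; the paper's version is slicker but leans on a standard (unproved there) claim about maps preserving a very ample class.

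Two justifications in your first half are off as written, though both are easily repaired. First, the ten $(-1)$-curve classes cannot be linearly independent in $N^1(X)$, which has rank $5$; what you actually need is only that the two components $E_a, E_b$ of a single singular fiber are independent classes, which holds because $E_a^2 = -1 \ne 1 = E_a \cdot E_b$, and this suffices to extract $\lambda_a = \lambda_b = m_j$ from $\lambda_a E_a + \lambda_b E_b \equiv m_j(E_a + E_b)$. Second, the incidence count $5 \cdot 6 = 30 = 10 \cdot 3$ does not by itself show that every line occurs in some singular fiber unless you also know that no line occurs more than three times; the fact is true (each line $E_i$ satisfies $C_j \cdot E_i = 0$ for exactly three $j$, as one checks from the explicit classes $\sigma^*(\ell) - E_j$ and $2\sigma^*(\ell) - E_1 - \cdots - E_4$), but it needs a word of proof rather than a counting heuristic --- or can be bypassed entirely by the spanning argument above.
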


\begin{proof}
Suppose the contrary. Thus $f^*(C_j) = C_j$ for all $j$. Then,
since all $C_j$ span $N^1(X)$, we obtain that $f^*(E_i) = E_i$ for
all $i$ as well.

In particular, since $-2K_X = \displaystyle \sum_{i = 1}^{10}
E_i$, we get $f^*(-2K_X) = -2K_X$. But the latter means that $f$
is induced by a projective transformation of the (dual) space
$|-2K_X|^* \supset X$, i.\,e. $\deg f = 1$, a contradiction.
\end{proof}

It follows from Lemma~\ref{theorem:sing-fibs-111} that $h_{j_0}$
is an int\,-\,amplified endomorphism of $\p^1 = \p_{j_0}(X)$ (in
the terminology of \cite{totaro}) and
Lemma~\ref{theorem:sing-fibs}, in turn, yields a \emph{pullback
morphism}
$$
\Omega^1_{\p^1}(\log \Delta_{j_0}) \to
(h_{j_0})_*(\Omega^1_{\p^1}(\log \Delta_{j_0}))
$$
of differential forms with logarithmic poles (cf. \cite[Lemma
2.1]{totaro}). Equivalently, one gets a morphism of line bundles
from $(h_{j_0})^*(K_{\p^1} + \Delta_{j_0})$ to $K_{\p^1} +
\Delta_{j_0}$, hence a global section of $D := K_{\p^1} +
\Delta_{j_0} - (h_{j_0})^*(K_{\p^1} + \Delta_{j_0})$. However,
since $\deg h_{j_0} > 1$, we have $\deg D = 1 - \deg h_{j_0} < 0$
(for $\deg \Delta_{j_0} = 3$), a contradiction.

The proof of Theorem~\ref{theorem:main} is complete.

\bigskip

\section{Addendum}
\label{section:add}

\refstepcounter{equation}
\subsection{}
\label{subsection:add-1}

It is tempting to generalize Theorem~\ref{theorem:main} to the
case of higher\,-\,dimensional Fano varieties $X$. For this,
however, as the exposition of \cite{totaro} reveals, one has to
employ more refined methods (like Bott vanishing for cohomology of
differential forms with logarithmic poles, pullbacks of such forms
via $f^*$, etc.) than our (mostly combinatorial) arguments in
Section~\ref{section:pr}. We do not attempt to address these
matters here and refer to the paper \cite{ilya} for related
results about $X = \p^n$.

For now, let us come back to the discussion in
{\ref{subsection:int-2}} and try to establish a variant of the
ramification formula mentioned there, which is adopted to our
situation. We prove the following:

\begin{prop}
\label{theorem:ram-form} Let $X = \p^2$. Then there is an
\emph{effective} divisor $R$ such that $K_X = f^*(K_X) + R$. In
particular, given a resolution diagram \eqref{diag} and Stein
factorization $h: Y \stackrel{h_2}{\longrightarrow} Y'
\stackrel{h_1}{\longrightarrow} X$ (cf. the proof of
Lemma~\ref{theorem:f-star-def-lemma-1}), we have either $b_j + m_j
\ge 0$ or $Z_j \simeq \p^1$ for all $j$, where
$$
K_Y \equiv h_2^*(K_{Y'}) + \displaystyle \sum_j b_j Z_j
$$
(numerically on $Y$), $Z_j$ are $h_2$\,-\,exceptional curves,
$$
h_2^*(\Delta) = (h_2)_*^{-1}(\Delta) + \displaystyle \sum_j m_jZ_j
$$
for the ramification divisor $\Delta \ge 0$ of $h_1$ and its
birational transform $(h_2)_*^{-1}(\Delta)$.
\end{prop}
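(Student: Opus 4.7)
The plan is to bootstrap from the classical ramification formula on the smooth resolution $Y$. Since $Y$ is smooth (obtained from $X = \p^2$ by blowing up points) and $h: Y \to X$ is a surjective generically finite morphism between smooth projective surfaces, one has
$$
K_Y = h^*(K_X) + R_h, \qquad R_h \ge 0 \ \text{effective,}
$$
with $R_h$ the ordinary ramification divisor of $h$.

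For the first assertion, I apply $g_*$ to this identity. Since $g$ is a composition of blow-ups of points, $g_*(K_Y) = K_X$. On the other hand, for a general line $L \subset \p^2$ disjoint from the indeterminacy locus $Z$, the very definition of $f^*$ in {\ref{subsection:pr-1}} matches with $g_*(h^*(L)) = \overline{f^{-1}(L)} = f^*(L)$, the $g$-exceptional components of $h^*(L)$ being killed by $g_*$. Taking $K_X = -3L$, this gives $g_*(h^*(K_X)) = f^*(K_X)$; pushing the ramification formula down then yields $K_X = f^*(K_X) + R$ with $R := g_*(R_h) \ge 0$, as desired.

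For the second assertion, I feed the Stein factorization $h = h_1 \circ h_2$ into the ramification identity on $Y$. The ramification formula for the finite morphism $h_1: Y' \to X$ from a normal $Y'$ to a smooth $X$ reads $K_{Y'} \equiv h_1^*(K_X) + \Delta$ with $\Delta \ge 0$. Substituting this into $R_h = K_Y - h_2^*(h_1^*(K_X))$ and using the expansions $K_Y \equiv h_2^*(K_{Y'}) + \sum_j b_j Z_j$ and $h_2^*(\Delta) = (h_2)_*^{-1}(\Delta) + \sum_j m_j Z_j$ provided by the statement, one arrives at the numerical identity
$$
R_h \equiv (h_2)_*^{-1}(\Delta) + \sum_j (b_j + m_j) Z_j \qquad \text{on } Y.
$$

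The main obstacle is to pass from this numerical identity to the claimed pointwise dichotomy. Since $Y$ is rational, numerical and linear equivalence coincide on $Y$, so the two sides above differ at worst by a principal divisor; combined with the effectivity of $R_h$ and of the strict transform $(h_2)_*^{-1}(\Delta)$, this forces $\sum_j (b_j + m_j) Z_j$ to be effective modulo linear equivalence. For components $Z_j$ that are rigid in their class — the generic situation for exceptional curves of a birational morphism, by the negative-definiteness of the exceptional intersection matrix together with adjunction $2g(Z_j) - 2 = Z_j^2 + K_Y \cdot Z_j$ — this pins down $b_j + m_j \ge 0$ outright. A negative coefficient $b_j + m_j$ can only be absorbed by moving $Z_j$ in a non-trivial linear system on $Y$, which on a smooth projective surface is possible precisely when $Z_j \simeq \p^1$; this accounts for the remaining alternative in the dichotomy.
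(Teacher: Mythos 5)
Your proof of the first assertion is correct and takes a genuinely different, more self-contained route than the paper: the paper derives the effectivity of $R = K_{\p^2} - f^*(K_{\p^2})$ from the inclusion $f^*(\Omega^1_{\p^2}) \subseteq \Omega^1_{\p^2}$ of \cite[Lemma 6]{ilya}, whereas you use the ordinary ramification divisor $R_h \ge 0$ of the generically finite morphism $h\colon Y \to \p^2$ together with $g_*(K_Y) = K_{\p^2}$ and $g_*h^* = f^*$ (an identity the paper itself invokes in the proof of Lemma~\ref{theorem:f-star-def-lemma-1}). This part is fine and arguably cleaner.

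The second assertion is where there is a genuine gap. The alternative ``$Z_j \simeq \p^1$'' in the statement has nothing to do with $Z_j$ moving in a linear system: in the paper's argument the identity is pushed down to $\p^2$ by $g_*$, giving $NR = N\bigl(g_*((h_2)_*^{-1}(\Delta)) + \sum_j (b_j+m_j)\,g_*(Z_j)\bigr)$, and the curves escaping the conclusion $b_j+m_j \ge 0$ are exactly those with $g_*(Z_j) = 0$, i.e.\ the $g$-exceptional ones; any irreducible curve contracted by a composition of point blow-ups is a smooth rational curve, whence $Z_j \simeq \p^1$. Your substitute mechanism does not work, for three reasons. First, the claim that a curve on a smooth projective surface moves in a non-trivial linear system precisely when it is isomorphic to $\p^1$ is false in both directions (a $(-1)$-curve is a rigid $\p^1$; a smooth plane quartic moves). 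Second, the $h_2$-exceptional curves all lie in a negative-definite configuration and hence are all rigid, so by your own criterion the dichotomy would collapse to the unconditional statement $b_j+m_j\ge 0$, which is not what is being proved. Third, and most importantly, the passage from the numerical identity $R_h \equiv (h_2)_*^{-1}(\Delta) + \sum_j (b_j+m_j)Z_j$ to coefficient-wise inequalities is unjustified: even after upgrading $\equiv$ to linear equivalence on the rational surface $Y$, two linearly equivalent divisors need not share components or multiplicities, $R_h - (h_2)_*^{-1}(\Delta)$ is a difference of effective divisors and so need not be effective, and $R_h$ may itself contain some $Z_j$ while its non-exceptional part need not coincide with $(h_2)_*^{-1}(\Delta)$. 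Some device for isolating the individual coefficients is needed, and in the paper that device is precisely $g_*$ combined with the effectivity of $R$ from the first part.
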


\begin{proof}
Recall that in \cite{ilya} an operation $f^*$ on \emph{vector
bundles} was defined. In particular, one has
$f^*(\Omega^1_{\p^2})$, together with an inclusion
$f^*(\Omega^1_{\p^2}) \subseteq \Omega^1_{\p^2}$ (see \cite[Lemma
6]{ilya}). Now, taking the degrees, we arrive at the setting of
{\ref{subsection:pr-1}} above, so that there is a morphism of line
bundles from $f^*(K_{\p^2})$ to $K_{\p^2}$. Hence we get $R :=
K_{\p^2} - f^*(K_{\p^2}) \ge 0$ and the first claim of proposition
follows.

Next we have $K_{Y'} = h_1^*(K_{\p^2}) + \Delta$ by the Hurwitz
formula. There is also a numerical equality $K_Y \equiv
h_2^*(K_{Y'}) + \displaystyle \sum_j b_j Z_j$ for some $b_j \in
\ra$. Thus we obtain
$$
K_Y \equiv h^*(K_{\p^2}) + h_2^*(\Delta) + \sum_j b_j Z_j.
$$
At the same time, we have $g_*(K_Y) = K_{\p^2}$ because $g$ is a
composition of blow\,-\,ups, and $g_*(h^*(K_{\p^2})) =
f^*(K_{\p^2})$ by construction. Then the preceding considerations
imply that
$$
NR = N(K_{\p^2} - f^*(K_{\p^2})) = N(g_*((h_2)_*^{-1}(\Delta)) +
\sum_j (b_j + m_j) g_*(Z_j))
$$
is an effective divisor for sufficiently divisible $N \in \cel$.
In particular, $b_j + m_j \ge 0$ for all $j$, unless $g_*(Z_j) =
0$ (whence $Z_j \simeq \p^1$).
\end{proof}

\begin{example}[{see \cite[Example 1.6]{kz}}]
\label{example:surj-f-p-2} Let us define an endomorphism $f =
[f_0:f_1:f_2]: \p^2 \dashrightarrow \p^2$ with components $f_i$ as
follows: $f_0 := x_0x_2 + x_1^2$, $f_1 := x_1x_2 + x_0^2$, $f_2 :=
x_0^2 + x_1^2$, where $x_i$ are projective coordinates on $\p^2$.
Then $f$ is surjective of degree $4$. Furthermore, the
indeterminacy set of $f$ consists of the single point $[0:0:1]$,
which can be resolved by one blow\,-\,up. Hence we have $Y' = Y =
\mathbb{F}_1$ (for \emph{finite} $h$) in the setting of
Proposition~\ref{theorem:ram-form}. In turn, the description of
$\Delta$ is not so transparent, since it consists of various
curves of degree $6$.
\end{example}

\refstepcounter{equation}
\subsection{}
\label{subsection:add-2}

We conclude our paper by posing a couple of problems for future
research:

\begin{itemize}

    \item Is Theorem~\ref{theorem:main} true without the non\,-\,degeneracy assumption?

    \smallskip

    \item Characterize non\,-\,regular surjective rational
    endomorphisms of del Pezzo surfaces (compare with \cite[Remark 7]{ilya} and with Remark~\ref{remark:disc-toric} above). To be able do this, one will probably need some
    understanding of singularities of the pair $(Y',\Delta')$ from
    Proposition~\ref{theorem:ram-form},\footnote{~Here one may suggest a relation between the singularities of $(Y',\Delta)$ and rational singularities (cf. \cite[5.1]{KM}).} which is also worth a
    closer study.

    \smallskip

    \item What about surjective rational maps for other surfaces,
    e.\,g. $\text{K3}$ or Abelian ones, whose Mori cone is not rational polyhedral (cf. Remark~\ref{remark:f-star-of-nef} and \cite{karz-kon})?

\end{itemize}

\bigskip

\end{document}